\newcommand{\nn}{\lhd}
\newcommand{\z}{\mathbb Z}
\newtheorem{lem}{Lemma}[section]
\newtheorem{co}[lem]{Corollary}
\newtheorem{thm}[lem]{Theorem}
\newtheorem{prop}[lem]{Proposition}
\newtheorem{qu}[lem]{Question}
\newenvironment{proof}{\textbf{Proof.}}{\newline\hspace*{\fill}{$\Box$}\\}
\begin{document}
\title{A formula for the normal subgroup growth of Baumslag-Solitar groups}
\author{J.\,O.\,Button\\
Selwyn College\\
University of Cambridge\\
Cambridge CB3 9DQ\\
U.K.\\
\texttt{jb128@dpmms.cam.ac.uk}}
\date{}
\maketitle
\begin{abstract}
We give an exact formula for the number of normal subgroups of each finite
index in the Baumslag-Solitar group $BS(p,q)$ when $p$ and $q$ are coprime.
Unlike the formula for all finite index subgroups, this one distinguishes
different Baumslag-Solitar groups and is not multiplicative. This allows
us to give an example of a finitely generated profinite group which is
not virtually pronilpotent but whose zeta function has an Euler product. 
\end{abstract}
\section{Introduction}

A finitely generated group $G$ has finitely many subgroups $a_n(G)$ of each
finite index $n$ and hence finitely many normal subgroups $a_n^\nn(G)$.
If $R$ is the finite residual of $G$, that is the intersection
of all finite index subgroups, then $a_n(G)=a_n(G/R)$ and $a_n^\nn(G)=
a_n^\nn(G/R)$ by the correspondence theorem, so it is enough to consider
only residually finite groups.
There has been much recent activity in this area on the behaviour and
asymptotic growth of these functions. There has also been work on
obtaining exact formulae for $a_n(G)$ when $G$ lies in particular classes
of groups. However much less is known about exact formulae for 
$a_n^\nn(G)$. The only cases we can find in the literature are\\
(1) Some finite groups.\\
(2) Some abelian groups (where $a_n(G)=a_n^\nn(G)$ anyway).\\
(3) Some torsion-free nilpotent groups of class 2 in \cite{frzds}. Actually
the normal zeta functions $\zeta_G^\nn(s)$ are given from which we can
deduce $a_n^\nn(G)$.\\
(4) The 17 wallpaper groups have $a_n^\nn(G)$ (or rather $\zeta_G^\nn(s)$)
listed in \cite{dsms}. These groups are  all finite extensions of the
free abelian group of rank 2.\\
(5) Groups $G$ where $G/R$ is in (1) to (4).\\

In this note we give a formula for the famous class of 2 generator 1 relator
groups called the Baumslag-Solitar groups. These were introduced in \cite{bs}
and the group $BS(p,q)$ has presentation
\[\langle t,a|ta^pt^{-1}=a^q\rangle
\mbox{ for }p,q\in\z-\{0\}.\]
We consider the case when $p$ and $q$ are coprime and prove that
\begin{equation}a_n^\nn(BS(p,q))=\sum_{\substack{
d|n\\d|q^{(n/d)}-p^{(n/d)}}}
\mbox{gcd}(d,q-p).
\end{equation}
In \cite{gel} similar looking formulae in the case of all finite index 
subgroups were established for the same groups:
\begin{equation}
a_n(BS(p,q))=\sum_{\substack{
d|n\\ \mbox{gcd}(d,pq)=1}}d. 
\end{equation}
(In fact this formula in the case $p=1$ was earlier given in \cite{mols}
Theorem 2.)
Thus it is clear from (2) that $a_n(BS(p,q))$ is indistinguishable from
$a_n(BS(p',q'))$ when $pq$ and $p'q'$ have the same prime factors but we
show that the sequence $a_n^\nn(BS(p,q))$ uniquely determines the
Baumslag-Solitar group (and indeed $p$ and $q$ up to the obvious changes
of swapping $p$ and $q$ and replacing them both with their negatives).
We also prove that the sequence $a_n^\nn (BS(p,q))$ is not multiplicative,
unlike $a_n(BS(p,q))$. In \cite{dsms} it is asked whether a profinite
group whose zeta function has an Euler product is virtually pronilpotent.
We show that this is not the case for the profinite completion of
$BS(1,q)$ when $q\neq\pm 1$.
Finally we work out which finitely generated groups $G$ have $a_n(G)=
a_n^\nn(G)$ for all $n$.

\section{The formula}

Let $G$ be the Baumslag-Solitar group $BS(p,q)$ where $p$ and $q$ are
coprime. The key point which allows us to enumerate the finite images
of $G$, and hence the finite index normal subgroups, is the following:
\begin{prop}
If $G=BS(p,q)$ for $p$ and $q$ coprime then the quotient of $G$ by its
finite residual is metabelian.
\end{prop}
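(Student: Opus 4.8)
The plan is to reduce the statement to a property of the finite quotients of $G$. Writing $R$ for the finite residual, I observe that $G/R$ is metabelian exactly when the second derived subgroup $G''$ is contained in every finite-index normal subgroup, i.e. exactly when every finite quotient of $G$ is metabelian. So I would fix an arbitrary finite quotient $\varphi\colon G\to Q$, set $a_0=\varphi(a)$ and $\bar t=\varphi(t)$, and aim to show $Q$ is metabelian. The defining relation gives $\bar t\,a_0^{\,p}\,\bar t^{-1}=a_0^{\,q}$, and more generally, writing $a_i=\bar t^{\,i}a_0\bar t^{-i}$, one has $a_{i+1}^{\,p}=a_i^{\,q}$ for every $i$.

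The crux is a divisibility observation about $m$, the order of $a_0$ in $Q$. Since $a_i$ and $a_0$ are conjugate they share the order $m$, so comparing the orders of the two sides of $a_{i+1}^{\,p}=a_i^{\,q}$ gives $m/\gcd(m,p)=m/\gcd(m,q)$, whence $\gcd(m,p)=\gcd(m,q)$. As this common value divides both $p$ and $q$, coprimality forces $\gcd(m,pq)=1$. This is the key number-theoretic input, and it is consistent with the constraint $\gcd(d,pq)=1$ visible in formula (2).

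With $\gcd(m,p)=1$ I can choose $s$ with $ps\equiv 1\pmod m$, and then invert the relation: because $a_{i+1}$ has order $m$ coprime to $p$, I obtain $a_{i+1}=(a_{i+1}^{\,p})^{\,s}=(a_i^{\,q})^{\,s}=a_i^{\,qs}$. Iterating shows that every $a_i$ is a power of $a_0$, so the normal closure $\langle\langle a_0\rangle\rangle$ of $a_0$ in $Q$ is simply the cyclic group $\langle a_0\rangle$, in particular abelian. Since $Q/\langle\langle a_0\rangle\rangle$ is generated by the image of $\bar t$ and hence is cyclic, the commutator subgroup $Q'$ lies inside the abelian group $\langle a_0\rangle$; thus $Q''=1$ and $Q$ is metabelian. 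Running this for every finite quotient yields $G''\subseteq R$, which is the proposition.

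I expect the subtle point to be the inversion step rather than the order count. In a general finite group the equation $x^{\,p}=y^{\,p}$ does not force $x=y$, so the argument genuinely needs that $a_{i+1}$ itself (not merely $a_{i+1}^{\,p}$) has order coprime to $p$, which is exactly what lets me extract a well-defined $p$-th root inside $\langle a_{i+1}\rangle$ and land in $\langle a_0\rangle$. I would state this root extraction precisely, and note that $\gcd(qs,m)=1$ so that the indexing $i\mapsto a_i$ is consistent for negative $i$ as well.
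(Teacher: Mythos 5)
Your proof is correct, but it takes a genuinely different route from the paper's. The paper proves the proposition via Malce'ev's observation that, for a finitely generated group, any element of the kernel of a surjective endomorphism lies in the finite residual: applying this to $\theta(t)=t$, $\theta(a)=a^p$ (surjective precisely because $\gcd(p,q)=1$), each commutator $[t^jat^{-j},a]$ lies in $\ker(\theta^j)$ since $\theta^j$ sends it to $[a^{q^j},a^{p^j}]=1$, so the normal closure of $a$ in $G/R$ is abelian and the quotient by it is cyclic. You instead work one finite quotient at a time, and your key step --- comparing orders across $a_{i+1}^p=a_i^q$ to get $\gcd(m,p)=\gcd(m,q)$, hence $\gcd(m,pq)=1$ by coprimality, then extracting a $p$-th root to show every $\bar t$-conjugate of $a_0$ lies in $\langle a_0\rangle$ --- is an elementary order count that appears nowhere in the paper's proof of this proposition. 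Notably, your argument simultaneously establishes the paper's Proposition 2.2 (every finite image is metacyclic with the order of $a$ coprime to $p$ and $q$), which the paper proves only afterwards by appealing to Lubotzky's largeness criterion (or the Edjvet--Pride result) in combination with Proposition 2.1; so your route is more self-contained and bypasses that machinery entirely. What the paper's approach buys in exchange is brevity (given Malce'ev's lemma) and a conclusion stated directly about $G/R$, via an explicit family of commutators shown to lie in $R$, whereas you obtain the same thing by intersecting over all finite-index normal subgroups --- a legitimate reduction, since $R$ is exactly that intersection. The subtleties you flag (the root extraction requiring $\gcd(m,p)=1$ rather than mere cancellation of $p$-th powers, and consistency for negative $i$ via $\gcd(qs,m)=1$, which gives $\langle a_{i+1}\rangle=\langle a_i\rangle$ for all $i$) are exactly the right ones and are handled correctly.
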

\begin{proof}
This is probably known but an interesting way to see this is to recall
that the proof of Malce'ev's result that a finitely generated residually
finite group is Hopfian actually shows that any element in ker$(\theta)$
for $\theta$ a surjective homomorphism of a finitely generated group $G$
is also in the finite residual $R$. Therefore on taking the famous
homomorphism $\theta(t)=t,\theta(a)=a^p$ which is onto if gcd$(p,q)=1$,
we have that for any $j\geq 1$ the commutator $[t^jat^{-j},a]$ is in
ker$(\theta^j)$ and so the normal closure of $a$ in $G/R$, which is
generated by the elements of the form $t^kat^{-k}$ for $k\in\z$, is
abelian.
\end{proof}
Of course if $p=1$ then $G$ is metabelian anyway, and residually finite
by a result of Philip Hall but otherwise we have a non-ascending HNN
extension of $\z$ so $G$ will contain a non-abelian free group.

This gives us
\begin{prop}
If $F$ is a finite image of $G=BS(p,q)$ for $p$ and $q$ coprime then $F$
is metacyclic with the order of $a$ in $F$ being coprime to $p$ and to $q$.
\end{prop}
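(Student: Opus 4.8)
The plan is to exploit the previous proposition, which tells us that the image of the normal closure of $a$ is abelian in every finite quotient. First I would observe that since $F$ is a finite image of $G$ it factors through $G/R$, so $F$ is metabelian. More precisely, writing $N=\langle\langle a\rangle\rangle$ for the normal closure of $a$ in $G$ and $\bar A$ for its image in $F$, the subgroup $\bar A$ is abelian and normal, and $F/\bar A$ is a quotient of $G/N$. Since killing $a$ in the presentation collapses $G$ onto $\langle t\rangle\cong\z$, the quotient $F/\bar A$ is cyclic. Thus the whole task reduces to showing that $\bar A$ is cyclic and pinning down the order of $a$.

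For the order I would argue directly from the defining relation $ta^pt^{-1}=a^q$: this says $a^p$ and $a^q$ are conjugate in $F$, hence have equal order. If $a$ has order $m$ in $F$, then $a^p$ has order $m/\gcd(m,p)$ and $a^q$ has order $m/\gcd(m,q)$, so $\gcd(m,p)=\gcd(m,q)$. This common value divides both $p$ and $q$, hence divides $\gcd(p,q)=1$, forcing $\gcd(m,p)=\gcd(m,q)=1$. This yields the claimed coprimality and, crucially, makes $p$ and $q$ invertible modulo $m$.

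With coprimality in hand I would show $\bar A$ is cyclic. Because $\bar A$ is abelian and contains $\bar a$, conjugation by $\bar a$ is trivial on $\bar A$, so $\bar A$ is generated by the $t$-conjugates $a_k=\bar t^{\,k}\bar a\,\bar t^{-k}$, $k\in\z$. Each $a_k$ is conjugate to $\bar a$, hence has order $m$, and the relation gives $a_{k+1}^p=a_k^q$ in the abelian group $\bar A$. Choosing $p'$ with $pp'\equiv1\pmod m$ yields $a_{k+1}=(a_{k+1}^p)^{p'}=a_k^{qp'}$, so $a_{k+1}\in\langle a_k\rangle$; running the relation the other way, using an inverse of $q$ modulo $m$, handles negative $k$. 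By induction every $a_k$ lies in $\langle\bar a\rangle$, whence $\bar A=\langle\bar a\rangle$ is cyclic. As $F$ is then a cyclic-by-cyclic extension it is metacyclic, completing the argument.

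The main obstacle I anticipate is the interplay in the last paragraph: the cyclicity of $\bar A$ genuinely requires $m$ to be coprime to \emph{both} $p$ and $q$, so the order computation is not merely a side conclusion but the very tool that unlocks the structure. One should also take a little care in justifying that $\bar A$ is generated by the $t$-conjugates alone, which is precisely where the abelianness supplied by the previous proposition is used.
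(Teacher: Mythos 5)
Your proof is correct, but it reaches the key coprimality claim by a genuinely different route from the paper. The paper argues structurally: $\theta(\langle a^p\rangle)$ is conjugate in $F$ to $\theta(\langle a^q\rangle)$, and if this subgroup were \emph{strictly} contained in $\theta(\langle a\rangle)$ then Lubotzky's free-quotient criterion would make $G$ large, contradicting the metabelianness of $G/R$ from Proposition 2.1; the forced equality $\theta(\langle a^p\rangle)=\theta(\langle a^q\rangle)=\theta(\langle a\rangle)$ then yields the coprimality of $d=\mathrm{ord}(\theta(a))$ with $p$ and $q$, after which normality of $H=\langle\theta(a)\rangle$ and cyclicity of $F/H=\langle\theta(t)H\rangle$ follow from the defining relation. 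You instead get coprimality by pure arithmetic: $\bar a^p$ and $\bar a^q$ are conjugate, hence of equal order, so $\gcd(m,p)=\gcd(m,q)$, and this common value divides $\gcd(p,q)=1$ --- an entirely elementary observation that bypasses the largeness machinery altogether. Your appeal to Proposition 2.1 is correspondingly lighter (used only to see that the image $\bar A$ of the normal closure is abelian, hence generated by the $t$-conjugates $a_k$), and in fact even that is avoidable: once $p$ is invertible modulo $m$, say $pp'\equiv 1\pmod m$, the relation gives $\bar t\bar a\bar t^{-1}=\bigl(\bar t\bar a^p\bar t^{-1}\bigr)^{p'}=\bar a^{qp'}$, so $\langle\bar a\rangle$ is normalized by $\bar t$ (and trivially by $\bar a$), making it normal with cyclic quotient without ever introducing $\bar A$ or the induction $a_{k+1}=a_k^{qp'}$ (which is nonetheless sound as you run it, in both directions). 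What the paper's heavier route buys is the conceptual dichotomy --- if the images of the associated subgroups could properly shrink, $G$ would be large --- which ties this proposition to the surrounding discussion of largeness; what yours buys is brevity and self-containedness, since the conjugate-order argument needs nothing beyond $\gcd(p,q)=1$.
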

\begin{proof}
Suppose we have $\theta:G\rightarrow F$. Then $H=\theta(\langle a^p\rangle)$
is conjugate in $F$ to $\theta(\langle a^q\rangle)$. If $H$ is strictly
contained in $\theta(\langle a\rangle)$ then by \cite{lub} we have that
$G$ is large. This happens if and only if $G/R$ is large but from above it is
metabelian. (Another way of seeing that $G$ is not large is to use \cite{epr}
Example 3.2 which shows that no finite index subgroup of $G$ has a homomorphism
onto $\z\times\z$.)

Consequently we have 
$\theta(\langle a^p\rangle)=\theta(\langle a^q\rangle)=
\theta(\langle a\rangle)=H$ and
if the order of $x=\theta(a)$ is $d$ then $p$ and $q$ are both coprime to
$d$. As $x^p$ and $x^q$ are generators of $H$ and $yx^py^{-1}=x^q$ where
$y=\theta(t)$, we see that the cyclic group $H$ is normal in $F$ and
$F/H=\langle yH\rangle$ is also cyclic.
\end{proof}
We can now deduce our main result.
\begin{thm}
If $p$ and $q$ are coprime then
\[a_n^\nn(BS(p,q))=\sum_{\substack{
d|n\\d|q^{(n/d)}-p^{(n/d)}}}
\mbox{gcd}(d,q-p).
\]
\end{thm}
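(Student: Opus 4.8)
The plan is to reduce everything to the metabelian quotient $\bar G:=G/R$, which is legitimate since every finite-index normal subgroup contains $R$ and hence $a_n^\nn(G)=a_n^\nn(\bar G)$. Writing $A$ for the normal closure of $a$, Proposition 1 tells us $A$ is abelian, and since killing $a$ collapses the relation we have $\bar G/A\cong\z=\langle t\rangle$; as $\z$ is free this extension splits, so $\bar G=A\rtimes\langle t\rangle$. I would then identify $A$, as a module over $\z[t^{\pm1}]$, with $M:=\z[t^{\pm1}]/(pt-q)$: it is cyclic on $a$ with the single relation $(pt-q)a=0$ coming from $ta^pt^{-1}=a^q$, and residual finiteness forces $A\cong M$ (equivalently $M\cong\z[1/(pq)]$ with $t$ acting as multiplication by $q/p$, a module whose finite residual is trivial).

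Next I would set up a bijection. Given $N\nn\bar G$ of index $n$, put $B:=N\cap M$. By Proposition 2 the quotient $\bar G/N$ is metacyclic, so $d:=[M:B]$ is the order of the image of $a$, the group $M/B$ is cyclic of order $d$, and $t$ acts on it by $r:=qp^{-1}\bmod d$; the image of $N$ in $\bar G/A\cong\z$ is $m\z$ with $m=n/d$. The first key computation is that for each $d$ there is a unique submodule $B$ with $M/B\cong\z/d$: since $M$ is generated by $a$ subject only to $(pt-q)a=0$ and $pr-q\equiv0\pmod d$, every element of $\z/d$ is an admissible image of $a$, the surjections correspond to the units, and all share the same kernel $B$ because the module automorphisms of $\z/d$ are exactly multiplication by units.

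With $B$ pinned down I would work in $\Gamma:=\bar G/B=(\z/d)\rtimes_r\langle t\rangle=\langle x,t\mid x^d,\,txt^{-1}=x^r\rangle$, where $x$ is the image of $a$. Here $N/B$ is a normal subgroup meeting $\langle x\rangle$ trivially and surjecting onto $m\z$, so it is infinite cyclic, generated by a unique element $t^mx^c$ with $c\in\z/d$, and distinct $c$ give distinct $N$. The decisive step is to determine when $\langle t^mx^c\rangle\nn\Gamma$. Conjugating the generator by $t$ forces $(r-1)c\equiv0\pmod d$, while conjugating it by $x$ forces $r^m\equiv1\pmod d$; the latter is exactly $d\mid q^{n/d}-p^{n/d}$ after clearing the unit $p$, and it also forces $\gcd(d,pq)=1$, which is why no coprimality hypothesis on $d$ appears in the sum. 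For a permissible $d$ the admissible values of $c$ are the solutions of $(r-1)c\equiv0\pmod d$, of which there are $\gcd(r-1,d)=\gcd(q-p,d)$, using $r-1\equiv(q-p)p^{-1}$ with $p^{-1}$ a unit.

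Putting this together, normal subgroups of index $n$ correspond bijectively to pairs $(d,c)$ with $d\mid n$, $d\mid q^{n/d}-p^{n/d}$, and $c$ one of the $\gcd(d,q-p)$ solutions above, which yields the stated formula. The main obstacle I anticipate is the bookkeeping of the two normality conditions in $\Gamma$, in particular noticing that it is conjugation by $x$ rather than by $t$ that produces the congruence $r^m\equiv1$ and hence the divisibility constraint in the sum, together with justifying the uniqueness of $B$ and the identification $A\cong M$; once these are in place the count $\gcd(q-p,d)$ drops out as the size of the kernel of multiplication by $r-1$ on $\z/d$.
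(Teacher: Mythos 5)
Your proof is correct, but it runs on the opposite side of the Galois correspondence from the paper's, so it is a genuinely different route to the same count. The paper works quotient-side: it fixes $d$ and $c=n/d$, writes each candidate finite image as a metacyclic presentation $\langle x,y\mid x^d,\,yxy^{-1}=x^q,\,y^c=x^s\rangle$, extracts the congruences $d\mid q^c-1$ and $d\mid s(q-1)$, invokes Johnson's theorem (\cite{joh}, Chapter 3, Theorem 1) to guarantee that the presented group really has order $cd$ (no collapse), and handles general $p$ by an after-the-fact substitution $u=x^p$ reducing to the case $p=1$ with $q$ replaced by $r$. You instead work subgroup-side inside the explicitly identified metabelian quotient $\bar G=\z[1/(pq)]\rtimes\langle t\rangle$: you pin down the unique index-$d$ submodule $B$ and classify $N$ by the generator $t^mx^c$ of the infinite cyclic complement $N/B$. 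Your two conditions $(r-1)c\equiv 0$ and $r^m\equiv 1\pmod d$ are exactly the paper's two congruences transported by $r\equiv qp^{-1}\pmod d$ and the dictionary (your $m,c$) $=$ (paper's $c,-s$); your observation that normality of $\langle t^mx^c\rangle$ forces the generator to be fixed by conjugation (via injectivity on it of the projection to $\z$) does double duty, replacing the appeal to Johnson's theorem because it simultaneously shows the quotient has order exactly $dm=n$. What each approach buys: the paper's is shorter by citing \cite{joh}, while yours is self-contained, treats all coprime $p,q$ uniformly, and makes the structure of $G/R$ explicit. The one point you state tersely is the identification $A\cong M=\z[t^{\pm 1}]/(pt-q)$; to make it airtight, either invoke Hall's residual finiteness of finitely generated metabelian groups to conclude $R=G''$ and $G/R=M\rtimes\z$, or, more cheaply, note that by Proposition 2.2 every finite image of $G$ is metacyclic, hence metabelian, so every finite-index normal subgroup contains $G''$ and $a_n^\nn(G)=a_n^\nn(G/G'')$ with $G/G''=M\rtimes\z$, avoiding the finite residual altogether.
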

\begin{proof}
We count normal subgroups $N$ of index $n$ in $G=BS(p,q)$ where $x$ has exact
order $d$ in the quotient. Fix $n$ and a factor $d$ and set $n=cd$. First
let us consider the case where $p=1$. We have that the group with presentation
\[\langle x,y|x^d,yxy^{-1}=x^q,y^c=x^s\rangle\]
for some $s$ defined modulo $d$ surjects to $F=G/N$ as these relations
all hold in $F$, where the last comes from the fact that $F/H$ is generated
by $yH$ and has order $c$. As $y^cxy^{-c}=x$ we must require of $c$ that $d$
divides $q^c-1$ or else $x$ would not have order $d$. Also 
$yx^sy^{-1}=x^s$ implies that $d$ divides $s(q-1)$ for the same reason.
Now with these conditions on $c$ and $s$ we have that the above presentation
is a metacyclic group of order $cd$ by \cite{joh} Chapter 3 Theorem 1, thus
this group is equal to $F$. Clearly $s$ is only defined modulo $d$ so the
number of allowable choices for $s$ is the number of integers $s$ from 1 to
$d$ such that $s(q-1)$ is 0 modulo $d$ which is gcd$(d,q-1)$. Moreover
different choices $s,s'$ modulo $d$ give rise to different normal subgroups
$N$ of $G$, or else we would have $y^c=x^s=x^{s'}$ in $G/N$ which again
contradicts the order of $x$. 

Now take a general $p$ and again fix $n$ and $d$. From Proposition 2.2 we
have that $d$ and $p$ are coprime so that in any finite image $F$ of
order $n$ in which $x$ has order $d$
we can replace $x$ with $u=x^p$, which also generates the
normal subgroup $H$ of $F$, to obtain the relation $yuy^{-1}=u^r$ where
$q\equiv pr$ modulo $d$. We can now follow the argument just as before to
obtain the claimed formula with $p$ replaced by 1 and $q$ by $r$.
However gcd$(d,q-p)=\mbox{gcd}(d,r-1)$ and $d$ divides $q^c-p^c$ if and
only if it divides $r^c-1$, so we are done.
\end{proof}

\section{Consequences}

It is pointed out in \cite{gel} (and in \cite{mols} for $p=1$) 
that the formula (2) for all finite
index subgroups only depends on the prime factors of $pq$ and so, as the
abelianisation $G/G'$ of a finitely generated group is the same
as that for $G/R$, we can create examples of infinitely many finitely generated
(and even residually finite)
groups $BS(p,q)$ (respectively $BS(p,q)/R$)
which are not isomorphic but which are isospectral, that
is the sequences $a_n(G)$ are the same. However in the case of normal
subgroups the sequence $a_n^\nn (BS(p,q))$ determines $p$ and $q$.
\begin{co}
If $p$ and $q$ are coprime with $q\geq |p|$ then the sequence\newline
$a_n^\nn(BS(p,q))$ uniquely determines $p$ and $q$. 
\end{co}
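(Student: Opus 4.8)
The plan is to extract two arithmetic invariants of the pair $(p,q)$ from the sequence, namely $D:=q-p$ and $S:=q+p$, and to observe that these determine $(p,q)$ under the normalisation $q\geq|p|$. Indeed, $q\geq|p|$ forces both $D$ and $S$ to be non-negative, and $q=(S+D)/2$, $p=(S-D)/2$, so the two symmetries (swapping $p,q$ and negating both) are used up and the pair is recovered uniquely. Since each of $D$ and $S$ is a non-negative integer determined by its prime-power valuations, it suffices to read off $v_\ell(q-p)$ and $v_\ell(q+p)$ for every prime $\ell$ from the sequence.

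First I would recover $D$ by restricting the formula of Theorem 2.3 to prime-power indices. For a prime $\ell$, writing $d=\ell^i$ the summand of $a_{\ell^k}^\nn$ is $\gcd(\ell^i,D)\,[\ell^i\mid q^{\ell^{k-i}}-p^{\ell^{k-i}}]$. If $\ell\nmid D$ then, since the multiplicative order of $qp^{-1}$ modulo $\ell$ divides $\ell-1$ and is therefore prime to $\ell$, no term with $i\geq1$ survives, so $a_{\ell^k}^\nn=1$ for all $k$; the same holds trivially when $\ell\mid pq$. If instead $\ell\mid D$, then lifting-the-exponent gives $v_\ell(q^{\ell^{k-i}}-p^{\ell^{k-i}})=v_\ell(D)+(k-i)$, the surviving range is $2i\leq v_\ell(D)+k$, and $a_{\ell^k}^\nn$ grows linearly in $k$ with eventual constant increment $\ell^{v_\ell(D)}$ under $k\mapsto k+2$. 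Thus the primes dividing $D$ are exactly those for which $k\mapsto a_{\ell^k}^\nn$ is non-constant, and the stabilised increment recovers $v_\ell(D)$; hence $D$ is determined.

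Next I would recover $S$, equivalently $q^2-p^2=DS$, by the analogous analysis along the sparse families $n=2\ell^k$ for odd $\ell$ and $n=2^k$ for $\ell=2$. Inserting the even exponent forces the order-$2$ congruence to switch on: for odd $\ell\mid q+p$ (which forces $\ell\nmid pq$ and $\ell\nmid q-p$) one has $\ell\mid q^m-p^m$ whenever $m$ is even, and lifting-the-exponent gives $v_\ell(q^m-p^m)=v_\ell(q+p)+v_\ell(m)$ for such $m$. Reading off the growth of $a_{2\ell^k}^\nn$ in $k$ then yields $v_\ell(q+p)$ exactly as before, while odd primes not dividing $q^2-p^2$ give a constant subsequence. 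The prime $2$ is handled through $a_{2^k}^\nn$ together with the identity $v_2(q^2-p^2)=v_2(q-p)+v_2(q+p)$ and the $p=2$ form of lifting-the-exponent. Assembling these valuations reconstructs $S$, and with $D$ already in hand we obtain $p$ and $q$.

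The main obstacle is that Theorem 2.3 presents $a_n^\nn$ as an aggregate sum, so no individual divisibility condition is directly visible; the work lies in choosing the sparse families of indices ($\ell^k$, $2\ell^k$, $2^k$) along which the sum collapses to a single controllable growth pattern, and in verifying the lifting-the-exponent input — in particular the anomalous behaviour at the prime $2$ — so that the offsets genuinely encode the valuations $v_\ell(q\pm p)$ and are not corrupted by spurious contributions from the remaining divisors of $n$.
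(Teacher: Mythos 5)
Your proposal is correct, but it proceeds by a genuinely different route from the paper's. You \emph{reconstruct} the pair $(q-p,q+p)$ outright, reading off every valuation $v_\ell(q\mp p)$ from the asymptotic behaviour of the subsequences $a_{\ell^k}^\nn$, $a_{2\ell^k}^\nn$ and $a_{2^k}^\nn$ via lifting-the-exponent; the paper instead runs a \emph{comparison} argument: assuming two sequences agree, it recovers $q-p$ in one stroke from the exact value $a_{q-p}^\nn=\sigma(q-p)$ together with the bound $a_n^\nn\leq\sigma(n)$ (strict when $|q-p|<n$), and then, with $q-p=q'-p'$ fixed, distinguishes $q+p$ from $q'+p'$ by exhibiting a single index ($2l^m$ for an odd prime power $l^m$ dividing one of $q+p,q'+p'$ but not the other, or $2^{s+2}$ for the $2$-part) at which the two sums visibly differ term by term — no LTE needed, since at those indices each surviving divisor contributes a term one can match off by hand. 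What your approach buys is an explicit algorithm: finitely many terms of the sequence effectively compute $p$ and $q$, which is stronger than mere injectivity; what it costs is the heavier bookkeeping you yourself flag, and two points deserve care when you write it up. First, your displayed LTE formula $v_\ell(q^{\ell^{k-i}}-p^{\ell^{k-i}})=v_\ell(D)+(k-i)$ fails at $\ell=2$ (for $i<k$ it should be $v_2(D)+v_2(q+p)+(k-i)-1$), though your conclusion survives because the extra summand $v_2(q+p)-1$ is constant in $k$, so the stabilised increment under $k\mapsto k+2$ is still $2^{v_2(D)}$ and the offset then yields $v_2(q+p)$; second, in the family $n=2\ell^k$ with $\ell\mid q+p$ the summands are $\gcd(\ell^i,q-p)=1$, so the increment is identically $1$ regardless of $v_\ell(q+p)$ — it is the \emph{offset} (computable since $D$, hence the constant contribution of the divisor $d=2$, is already known from the first step) and not the slope that encodes $v_\ell(q+p)$, so ``exactly as before'' is not quite right even though the recovery goes through, e.g.\ by summing two consecutive terms to eliminate the floor. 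Finally, the degenerate cases $q-p=0$ (only $BS(1,1)$) and $q+p=0$ (only $BS(-1,1)$) should be noted separately, since there the valuation bookkeeping degenerates (unbounded increments, respectively unbounded even-exponent divisibility), but both are detected immediately from the shape of the growth; the paper disposes of the first with the remark that $q=p$ forces $p=q=1$.
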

\begin{proof}
If $a_n^\nn (BS(p,q))=a_n^\nn (BS(p',q'))$ with $q-p>q'-p'\geq 0$ then
$a_{q-p}^\nn (BS(p,q))=\sigma(q-p)$ for $\sigma$ the sum of divisors
function. As we always have $a_n^\nn (BS(p,q))\leq\sigma (n)$ and strict
inequality if $|q-p|<n$, we see that $a_{q-p}^\nn (BS(p',q'))<\sigma (q-p)$.
Consequently we assume $q-p=q'-p'\geq 1$ as $q=p$ implies $p,p',q,q'$ are all
1. Let us take an odd prime $l$ with $l^m$ dividing $q+p$ but not $q'+p'$.
Then evaluating $a_{2l^m}^\nn (BS(p,q))$ involves summing over the odd
divisors $l^i$ and the even divisors $2l^i$ of $2l^m$. In the odd case we
obtain non-zero terms each time $l^i$ divides $q^{2l^{m-i}}-p^{2l^{m-i}}$,
which is always because $q+p$ does too, and these terms are always 1
because $p$ and $q$ are coprime. In the even case we pick up contributions
when $2l^i$ divides $q^{l^{m-i}}-p^{l^{m-i}}$. However if $l$ divides this
then it divides $q-p$ so the only non-zero term we have here comes from 2
if $q-p$ is even. As for $a_{2l^m}^\nn (BS(p',q'))$, first note that if we
pick up a term gcd$(d,q'-p')$ then it has the same value as for $BS(p,q)$
so we just need to show that we end up with less non-zero terms. But we have
the same contribution from the even divisors and we do not get $l^m$
dividing ${q'}^2-{p'}^2$ so this term of the sequence is strictly less.

This covers all cases except when $q+p=2^su>q'+p'=2^tu$ for $u$ odd where
we must have $t\geq 2$ and $q-p$ is 2 modulo 4. We similarly consider
the $2^{s+2}$-th term of each sequence and find that we pick up a term
from $2^{s+1}$ in the first case but not in the second.
\end{proof}
Note: It was shown in \cite{mold} that $BS(p,q)$ is isomorphic to 
$BS(p',q')$ if and only if $(p',q')=(p,q)$,$(-p,-q)$,$(q,p)$ or $(-q,-p)$.
Whilst Corollary 3.1 could be regarded as providing an alternative proof 
(at least when $p$ and $q$ are coprime), surely the quickest way to see 
this is to note that
the Alexander polynomial $\Delta(t)$ of $BS(p,q)$ is $pt-q$ and this is
defined up to multiplication by $\pm t^m$ for $m\in\z-\{0\}$. What Corollary
3.1 does establish is that for distinct Baumslag-Solitar groups $G$
with $p$ and $q$ coprime, we have that $G/R$ is distinct as well as the
profinite completion.

Two finitely generated groups $G_1,G_2$ will be isospectral if their profinite
completions are isomorphic, in which case we would have 
$a_n^\nn (G_1)=a_n^\nn (G_2)$ as well as the abelianisations $G_1/G_1'$ and
$G_2/G_2'$ being equal because the finite abelian images have to be the
same. The only isospectral groups known without
isomorphic profinite completions seem to be the fundamental groups of
the orientable and non-orientable surfaces of genus $g$, the example of
$\z\times\z\times C_2$ and $(\z\times\z)\rtimes C_2$ given in \cite{dsms}
after Theorem 1.4, and the Baumslag-Solitar groups in \cite{gel}. 
However none of these
examples provide isospectral pairs when we consider finite index normal
subgroups, because of Corollary 3.1 for Baumslag-Solitar groups
and by abelianising in the other cases.
Thus we ask the following:
\begin{qu} Are there examples of finitely generated groups $G_1$ and $G_2$
with non-isomorphic profinite completions but with both $a_n(G_1)=a_n(G_2)$
and $a_n^\nn (G_1)=a_n^\nn (G_2)$ for all $n$?
\end{qu}

Another property that we look for in the sequences $a_n(G)$ and $a_n^\nn (G)$
is that they are multiplicative, namely $a_{mn}^{(\nn)}
=a_m^{(\nn)}a_n^{(\nn)}$ when gcd$(m,n)=1$, as
if so then we obtain an Euler product for the zeta function or normal zeta
function of $G$. In \cite{puc} Theorem 1 it is shown that a finitely
generated profinite group $G$ is pronilpotent if and only if $a_n^\nn (G)$
is multiplicative. Consequently we can also answer the question after
Proposition 1.5 in \cite{dsms}
which asks that if the zeta function of a profinite group
$G$ has an Euler product then is $G$ virtually pronilpotent.
\begin{thm}
The profinite completion of $BS(1,q)$ for $q\neq\pm 1$ has the property that
its zeta function possesses an Euler product but it is not virtually
pronilpotent.
\end{thm}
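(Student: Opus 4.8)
The plan is to establish two separate claims: first, that the zeta function of the profinite completion of $BS(1,q)$ has an Euler product, and second, that this profinite group is not virtually pronilpotent. The Euler product property for the ordinary zeta function follows directly from formula (2) of the introduction, since $a_n(BS(1,q))=\sum_{d|n,\,\gcd(d,q)=1}d$ is a multiplicative function of $n$: indeed, restricting to $p=1$ makes the coprimality condition $\gcd(d,pq)=1$ become $\gcd(d,q)=1$, and one checks that $n\mapsto\sum_{d|n,\,\gcd(d,q)=1}d$ is multiplicative because it is a Dirichlet-type sum whose summand conditions split over coprime factorizations of $n$. Hence $\zeta_G(s)=\sum_n a_n(G)n^{-s}$ factors as an Euler product over primes.

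The second claim is where the real content lies, and by the cited result \cite{puc} Theorem 1, a finitely generated profinite group is pronilpotent precisely when its normal subgroup counting function is multiplicative. So to show the profinite completion is \emph{not} virtually pronilpotent, the natural strategy is to exhibit failure of multiplicativity not just for the whole group but for every finite-index subgroup simultaneously, or better, to show that no open subgroup can be pronilpotent. First I would invoke Theorem 2.3 (specialized to $p=1$) to write $a_n^\nn(BS(1,q))=\sum_{d|n,\,d|q^{n/d}-1}\gcd(d,q-1)$ and directly verify non-multiplicativity by a concrete computation, producing coprime $m,n$ with $a_{mn}^\nn\neq a_m^\nn a_n^\nn$; this already rules out pronilpotency of $G$ itself.

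The main obstacle will be passing from non-pronilpotency of $\widehat G$ to non-\emph{virtual}-pronilpotency, i.e.\ ruling out that some open subgroup is pronilpotent. The cleanest route is to argue that any open subgroup of $\widehat{BS(1,q)}$ contains, or maps onto, a profinite completion of a Baumslag-Solitar-type group of the same flavour, so that the same non-multiplicativity persists. Concretely, finite-index subgroups of $BS(1,q)$ are again (virtually) groups whose finite metacyclic quotients exhibit the same arithmetic of $q^k-1$ driving the formula, and a pronilpotent profinite group has every finite quotient nilpotent; but the metacyclic quotients $\langle x,y\mid x^d, yxy^{-1}=x^q,\dots\rangle$ appearing in the proof of Theorem 2.3 are non-nilpotent whenever $y$ acts on $\langle x\rangle$ nontrivially, which happens for infinitely many $d$ since $q\neq\pm1$. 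Thus I would show that every open subgroup still surjects onto infinitely many non-nilpotent finite metacyclic groups of this form, so no open subgroup can have all finite quotients nilpotent, and hence none is pronilpotent.

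Finally I would assemble these pieces: the Euler product from multiplicativity of $a_n(G)$, and the failure of virtual pronilpotency from the persistence of non-nilpotent finite images under passage to open subgroups. The delicate point to get right is the precise correspondence between open subgroups of $\widehat G$ and finite-index subgroups of $G$, together with the claim that the relevant non-nilpotent metacyclic images survive in these subgroups; I expect this verification, rather than the Euler product, to be the crux of the argument.
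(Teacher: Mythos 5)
Your first half is sound and coincides with the paper's argument: $a_n(BS(1,q))=\sum_{d\mid n,\ \gcd(d,q)=1}d$ is multiplicative (the paper proves this by marking the terms in the product $\sigma(m)\sigma(n)$ that violate the coprimality condition and erasing them on both sides), so the zeta function of the group, hence of its profinite completion, has an Euler product. Likewise, invoking \cite{puc} and then exhibiting non-multiplicativity of $a_n^\nn$ is exactly the paper's route to ruling out pronilpotency of the completion itself; you leave the ``concrete computation'' unperformed, but it is routine from Theorem 2.3 — the paper takes an odd prime $l$ dividing $1+q^2$ (which exists since $1+q^2\not\equiv 0$ mod $4$), checks $a_l^\nn=1$ because $l\nmid q-1$, and gets $a_{4l}^\nn\geq a_4^\nn a_l^\nn+1$ because the divisor $d=l$ of $4l$ contributes, as $l\mid q^4-1$. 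That omission is minor and fixable.

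The genuine gap sits exactly where you predicted the crux would be: the passage from non-pronilpotent to not \emph{virtually} pronilpotent. You assert that every open subgroup of the completion ``still surjects onto infinitely many non-nilpotent finite metacyclic groups of this form,'' but you supply no mechanism, and it is not automatic: a finite-index subgroup $H\leq G$ does not in general inherit the quotients of $G$, and a non-nilpotent image of $G$ can restrict to a nilpotent one on $H$ — in the quotient $C_l\rtimes C_k$ the subgroup $C_l$ is abelian, for instance. The paper closes this hole with a single structural input, cited from \cite{mols}: every finite-index subgroup of $BS(1,q)$ is isomorphic to $BS(1,q^n)$ for some $n\geq 1$ (verifiable directly from $BS(1,q)\cong\z[1/q]\rtimes\z$ if you want a proof). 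Since $q\neq\pm 1$ forces $q^n\neq\pm 1$, the non-multiplicativity computation applies verbatim to each such subgroup, and because finite-index subgroups carry the induced profinite topology, the open subgroups of the completion are precisely the profinite completions of the groups $BS(1,q^n)$; none is pronilpotent by \cite{puc}. (The paper also sketches an alternative closer in spirit to your plan: if a pronilpotent open subgroup of index $k_0$ existed, every finite image of $G$ of order $>k_0$ would contain a nontrivial nilpotent normal subgroup of index $\leq k_0$, hence a nontrivial element whose centraliser has index $\leq k_0$; but for a prime $l>\max(q^{k_0}-1,k_0,q)$ and $k>k_0$ minimal with $l\mid q^k-1$, the image $C_l\rtimes C_k$ has all centralisers of nontrivial elements of index $k$ or $l$, a contradiction.) Some such concrete input — either the subgroup classification or the centraliser argument — is indispensable; as stated, your proposal does not close.
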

\begin{proof}
We first show that for $p,q$ coprime we have $a_n(BS(p,q))$ is multiplicative
but $a_n^\nn(BS(p,q))$ is not (excepting of course $BS(1,1)=\z\times\z$).
On taking the formula (2) for $a_n(BS(p,q))$, and ignoring the gcd$(d,pq)=1$
condition in the sum, we have multiplicity of the sum of divisors function
$\sigma(n)$ which is proved by multiplying the sums $\sigma(m)$ and
$\sigma(n)$ together to obtain a new sum whose terms are exactly the
divisors of $mn$. Now put the condition back in and mark the terms in
$\sigma(m)$ and in $\sigma(n)$ that fail to be coprime to $pq$. When we
multiply out to get $\sigma(mn)$, the terms in this sum with one or both
factors marked are exactly the ones which are not coprime to $pq$. Hence by
erasing the marked terms on both sides we have multiplicity and thus we obtain
an Euler product for the zeta function of $BS(p,q)$ whenever $p$ and $q$ are
coprime, and hence for its profinite completion.

As for $a_n^\nn (BS(p,q))$, let us take an odd prime $l$ dividing $p^2+q^2$,
which will exist except when $(p,q)=(1,\pm 1)$ because $p^2+q^2$ is not
0 modulo 4. Now $a_l^\nn (BS(p,q))=1$ because $p^2+q^2$ and $q-p$ are
coprime (apart from a possible factor of 2) and $a_4^\nn (BS(p,q))$ is 1
if $q-p$ is odd, 3 if $q-p$ is 2 modulo 4 and 7 if $q-p$ is 0 modulo 4. But
as $l$ divides $q^4-p^4$ we have $a_{4l}^\nn (BS(p,q))$ is at least one more
than these numbers. (For $BS(1,-1)$ we have $a_2^\nn =3,a_3^\nn =1$ and
$a_6^\nn=4$.)

Thus by Puchta's result in \cite{puc} we have that the profinite completion
of $BS(p,q)$ is not pronilpotent unless $p=q=1$. Moreover on taking $p=1$
we have by \cite{mols} that the finite index subgroups of $BS(1,q)$ are
isomorphic to $BS(1,q^n)$ for $n\geq 1$, so if $q\neq\pm 1$ then the
profinite completion of $BS(1,q^n)$ is not virtually pronilpotent.
\end{proof}

Alternatively we can show that the profinite completion of $BS(p,q)$ is not
pronilpotent by directly finding a finite image which is not nilpotent.
Although if $p\neq 1$ we cannot then use the finite index subgroup trick as
above, an adaptation shows that the profinite completion is not virtually
pronilpotent in all cases other than $BS(1,\pm 1)$, although if $p\neq 1$ we
do not have an embedding of $BS(p,q)$ into its profinite completion. We
outline the proof: if $H$ has finite index $k_0$ in $G=BS(p,q)$ for $q>|p|$
and all the finite images of $H$ are nilpotent then any finite image of $G$
with order more than $k_0$ must have a non-trivial element whose centraliser
in $G$ has index at most $k_0$. But if we pick a prime $l>\mbox{max}(q^{k_0}
-p^{k_0},k_0,q)$ and let $k>0$ be the first value where $l$ divides $q^k-p^k$
then $k>k_0$ and $G$ has the finite image
\[F=\langle x,y|yx^py^{-1}=x^q,x^l,y^k\rangle =C_l\rtimes C_k.\]
But it is easily checked that $l$ being prime means that no non-trivial
power of $x$ commutes with no non-trivial power of $y$ and this in turn
implies that the centraliser of $x$ has index $k$ and all other centralisers
have index $l$.

Although we have assumed throughout, as in \cite{gel}, that $p$ and $q$
are coprime, it is reasonable to ask about the case of common factors.
However any results will be vastly different because now
$BS(p,q)$ is large by \cite{epr} Theorem 6, as if $g=\gcd (p,q)$ then we
have a surjection to the virtually free group $\z*C_g$ on adding the
relator $a^g$. This implies that it has
subgroup growth of strict type $n^n$ and, by \cite{mup} Theorem 1, normal
subgroup growth of strict type $n^{\log n}$. Both of these are the
fastest possible types for a finitely generated group. Moreover although
$BS(p,q)$ may still not be residually finite if common factors are present
(as this happens if and only if $|p|=|q|$), we have by \cite{ls} Theorem 3.1
that every finite group is an upper section. Hence a large group $L$ (and
any finite index subgroup of $L$ as this is also large) 
always has a finite non-soluble image and so the profinite completion of
a large group is never prosoluble.  Thus $a_n^\nn (BS(p,q))$ is not
multiplicative in this case by \cite{puc} and we can also show here that
neither is $a_n(BS(p,q))$.
\begin{prop}
If $G$ is a finitely generated group with $\lambda,\mu >0$ such that
$a_n(G)\geq\lambda e^{\mu n}$ for all large $n$ then $a_n(G)$ cannot be
multiplicative.
\end{prop}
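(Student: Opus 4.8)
The plan is to argue by contradiction, exploiting the tension between the exponential lower bound hypothesised for $a_n(G)$ and the fact that any multiplicative function is determined by its values at prime powers, where a finitely generated group can have only ``factorially many'' subgroups. The point is that multiplicativity at a product of distinct primes expresses a single large value of $a_n$ as a product of relatively few prime-index values, each of which is severely constrained.

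First I would recall the standard upper bound on subgroup growth. If $G$ is generated by $d$ elements then it is a quotient of the free group $F_d$, so $a_n(G)\le a_n(F_d)$, and the classical identity $a_n(F_d)=t_n/(n-1)!$ relating subgroups to transitive permutation representations, together with $t_n\le |S_n|^d=(n!)^d$, gives $a_n(F_d)\le n\,(n!)^{d-1}$. Taking logarithms yields a constant $C=C(d)$ with $\log a_n(G)\le C\,n\log n$ for all $n\ge 2$; in particular $\log a_p(G)\le C\,p\log p$ at every prime $p$. This is the only structural input needed from the group.

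Next, suppose for contradiction that $a_n:=a_n(G)$ is multiplicative, and evaluate it along the primorials $n_x=\prod_{p\le x}p$. Since the prime factors of $n_x$ are distinct, multiplicativity gives $a_{n_x}=\prod_{p\le x}a_p$, so
\[
 \log a_{n_x}=\sum_{p\le x}\log a_p\le C\sum_{p\le x}p\log p\le C\,x\log x\cdot\pi(x),
\]
where $\pi(x)$ denotes the number of primes up to $x$. As $\pi(x)\le x$, the right-hand side is $O(x^2\log x)$, so $\log a_{n_x}$ grows at most polynomially in $x$. On the other hand, writing $\theta(x)=\sum_{p\le x}\log p$ we have $n_x=e^{\theta(x)}$, and Chebyshev's elementary estimate gives $\theta(x)\ge c\,x$ for some $c>0$ and all large $x$, hence $n_x\ge e^{cx}$. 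Since $n_x\to\infty$, the standing hypothesis $a_{n_x}\ge\lambda e^{\mu n_x}$ applies and forces $\log a_{n_x}\ge\mu n_x+\log\lambda\ge\mu\,e^{cx}+\log\lambda$, which grows exponentially in $x$. For $x$ large this contradicts the polynomial bound $O(x^2\log x)$ above, completing the proof.

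I expect the only genuinely delicate point to be the choice of test sequence. A single prime index $p$ gives no leverage, because multiplicativity says nothing about the individual value $a_p$ and the hypothesis $a_p\ge\lambda e^{\mu p}$ is entirely compatible with the factorial bound. The primorials are exactly what convert multiplicativity into a usable constraint: $n_x$ is exponentially large in $x$ while having only $\pi(x)$ prime factors, each contributing at most $O(x\log x)$ to $\log a_{n_x}$, and the mismatch between $e^{cx}$ and $x^2\log x$ is what drives the contradiction.
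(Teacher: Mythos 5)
Your proof is correct, but it takes a genuinely different route from the paper's. The paper exploits the coprimality of consecutive integers: assuming multiplicativity, $a_{n(n+1)}=a_na_{n+1}\le k^2n^{nd}(n+1)^{(n+1)d}$, using the same factorial-type upper bound $a_n\le kn^{nd}$ for a $d$-generator group that you derive, while the hypothesis forces $a_{n(n+1)}\ge\lambda e^{\mu n(n+1)}$; since $e^{\mu n^2}$ outgrows $e^{O(n\log n)}$ the two bounds are incompatible for large $n$, so two coprime factors of size about $\sqrt{N}$ (where $N=n(n+1)$) already suffice, with no number-theoretic input beyond $\gcd(n,n+1)=1$. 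Your primorial argument instead splits $n_x=\prod_{p\le x}p$ into its $\pi(x)$ prime factors and needs Chebyshev's estimate $\theta(x)\ge cx$ to know that $n_x$ is exponentially large in $x$; in exchange it yields a quantitatively stronger conclusion: multiplicativity forces $\log a_{n_x}=O\bigl((\log n_x)^2\log\log n_x\bigr)$ along primorials, so your method rules out multiplicativity under hypotheses far weaker than exponential growth (for instance $a_n\ge e^{(\log n)^{2+\varepsilon}}$ for all large $n$), whereas the paper's two-factor split only produces a contradiction when the lower bound along $N=n(n+1)$ exceeds $e^{C\sqrt{N}\log N}$ for every $C$. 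The individual steps in your write-up all check out: the reduction $a_n(G)\le a_n(F_d)=t_n/(n-1)!\le n\,(n!)^{d-1}$, the inductive extension of pairwise multiplicativity to the squarefree integer $n_x$, and the application of the standing hypothesis at $n=n_x$, which is legitimate since $n_x\to\infty$. So the proposal is a valid, somewhat heavier, but sharper alternative to the paper's more elementary argument.
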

\begin{proof}
If so then we would have $a_{n(n+1)}\geq\lambda e^{\mu n(n+1)}$ but as
$a_n\leq kn^{nd}$ if $G$ is a $d$-generator group (as this is also
true for the free group of rank $d$), we obtain
\[a_{n(n+1)}/a_{n(n+1)}\geq \lambda e^{\mu n(n+1)}/k^2n^{nd}(n+1)^{(n+1)d}\]
which tends to infinity as $n$ does.
\end{proof}
\begin{co}
If $p$ and $q$ are not coprime then $a_n(BS(p,q))$ is not multiplicative.
\end{co}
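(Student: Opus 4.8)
The plan is to obtain this as an immediate consequence of Proposition 3.4, whose hypothesis requires only a single-exponential lower bound $a_n(G) \geq \lambda e^{\mu n}$ on the subgroup growth. The whole point is therefore to certify that $a_n(BS(p,q))$ grows at least exponentially when $p$ and $q$ share a common factor, and then to feed this into Proposition 3.4.

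First I would recall the largeness already established in the discussion above: when $g = \gcd(p,q) > 1$, adjoining the relator $a^g$ maps $BS(p,q)$ onto the virtually free group $\z * C_g$, so by \cite{epr} Theorem 6 the group $BS(p,q)$ is large. The standard consequence, also noted above, is that its subgroup growth is of strict type $n^n$. Writing this out, there is a constant $c > 0$ with $a_n(BS(p,q)) \geq e^{c n \log n}$ for all large $n$, and since $e^{c n \log n} = (e^{c \log n})^n$ eventually exceeds any fixed $\lambda e^{\mu n}$, the hypothesis of Proposition 3.4 holds with room to spare.

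Applying Proposition 3.4 then yields that $a_n(BS(p,q))$ is not multiplicative, which is exactly the assertion. I do not anticipate a genuine obstacle here: both ingredients, namely the largeness from \cite{epr} and the ensuing growth rate, are already in hand, and Proposition 3.4 is designed precisely to convert fast growth into a failure of multiplicativity. The only point needing a word of verification is that type $n^n$ growth genuinely dominates an exponential for all large $n$, but this is immediate on comparing exponents, since $c \log n \to \infty$ outpaces the constant $\mu$.
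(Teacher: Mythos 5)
There is a genuine gap at the one step that carries the whole argument: converting ``subgroup growth of strict type $n^n$'' into the hypothesis of Proposition 3.4. In the sense of \cite{ls}, growth type (strict or otherwise) is a statement about the cumulative function $s_n(G)=\sum_{m\leq n}a_m(G)$: strict type $n^n$ gives a constant $b>0$ with $s_n(BS(p,q))\geq n^{bn}$ for all large $n$, but it does \emph{not} give $a_n(BS(p,q))\geq e^{cn\log n}$, nor even $a_n(BS(p,q))\geq\lambda e^{\mu n}$, for \emph{every} large $n$ --- a priori the cumulative count could be carried by a sparse set of indices. Proposition 3.4 genuinely needs the per-index bound: its proof evaluates the hypothesis at the specific indices $n(n+1)$ in order to play the lower bound off against multiplicativity ($a_{n(n+1)}=a_na_{n+1}$) and the universal upper bound $a_n\leq kn^{nd}$. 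A bound holding only for infinitely many $n$, or only for $s_n$, is of no use here; for instance the indices where $a_n$ is known to be huge might all be prime, where multiplicativity imposes no constraint at all. So as written, your verification of the hypothesis of Proposition 3.4 does not go through.

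The repair is exactly what the paper does, and it stays within the ingredients you already named: since adding the relator $a^g$ gives a surjection of $BS(p,q)$ onto $\z*C_g$, one has $a_n(BS(p,q))\geq a_n(\z*C_g)$ for every single $n$ (not merely largeness, which is a virtual statement and would only control indices in certain congruence classes); after replacing $g$ by a prime divisor, via the further quotient $\z*C_g\twoheadrightarrow\z*C_{g'}$, Newman's asymptotic formula \cite{new} gives the explicit lower bound $\lambda n e^{(1-1/g)n(\log n-1)+n^{1/g}}$ for $a_n(\z*C_g)$ itself, valid for all large $n$, which comfortably satisfies the hypothesis of Proposition 3.4. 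Your overall architecture (largeness via the quotient $\z*C_g$, then Proposition 3.4) matches the paper's; the fix is to route the growth estimate through a per-index lower bound on $a_n$ of the quotient rather than through the strict-type statement about $s_n$.
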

\begin{proof}
We have $a_n(BS(p,q))\geq a_n(\z*C_g)$ and without loss of generality we may
assume that $g$ is a prime, in which case we have the lower bound 
$\lambda ne^{(1-1/g)n(\log n-1)+n^{1/g}}$ in \cite{new}.
\end{proof}

We finish by giving the finitely generated groups such that $a_n(G)=
a_n^\nn (G)$. Here we use the fact due to Dedekind and Baer that a
non-abelian finite group $F$ has all its subgroups normal if and only if
$F=Q\times A\times B$ where $Q$ is the quaternion group of order 8, $A$ is
an elementary abelian 2-group and $B$ is an abelian group of odd order.
\begin{prop}
If $G$ is a finitely generated group where every finite index subgroup is
normal then either $G/R$ is abelian and equal to $G/G'$ or $G/R$ is finite
and equal to $Q\times A\times B$ as above.
\end{prop}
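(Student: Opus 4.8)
The plan is to pass immediately to $\bar G=G/R$ and argue there. By the correspondence theorem the hypothesis ``every finite index subgroup is normal'' descends to $\bar G$, and since $\bar G$ is finitely generated and residually finite, the subgroups of any finite quotient $\bar G/N$ (these being the finite index subgroups of $\bar G$ containing $N$) are all normal. Thus every finite quotient of $\bar G$ is a Dedekind group, and so by the quoted Dedekind--Baer result each is either abelian or of the form $Q\times A\times B$. The proof then splits according to whether a non-abelian finite quotient occurs.

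First I would dispose of the abelian case. If every finite quotient $\bar G/N$ is abelian then $\bar G'\subseteq N$ for all finite index normal $N$, so $\bar G'\subseteq\bigcap N=1$ and $\bar G$ is abelian. To upgrade ``$G/R$ abelian'' to the stated ``$G/R=G/G'$'', I would note that $\bar G$ abelian forces $G'\subseteq R$, while applying the general fact $\theta(R(G))\subseteq R(H)$ to the surjection $\theta\colon G\to G/G'$ (whose target is finitely generated abelian, hence residually finite with trivial finite residual) gives $R\subseteq G'$; thus $R=G'$.

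Now suppose some finite quotient is non-abelian, say $\bar G/N_0=Q\times A_0\times B_0$. The key structural step is to show $\bar G'\cong C_2$. For this I would use that every finite index normal $N\subseteq N_0$ gives a quotient $\bar G/N$ surjecting onto the non-abelian $\bar G/N_0$, hence itself non-abelian and so Hamiltonian, whence $(\bar G/N)'=\bar G'N/N$ has order $2$ and maps isomorphically onto $(\bar G/N_0)'\cong C_2$. Since such $N$ are cofinal among all finite index normal subgroups, $\bar G'$ injects into the inverse limit of these copies of $C_2$ (every transition map being an isomorphism of $C_2$), so $\bar G'$ is a subgroup of $C_2$; as it surjects onto $(\bar G/N_0)'$ it equals $C_2$. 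In particular $\bar G$ is nilpotent of class $2$.

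It remains to show $\bar G$ is finite, and this is the step I expect to be the real crux. Since $\bar G'$ is finite it suffices to prove that the finitely generated abelian group $\bar A:=\bar G/\bar G'$ is finite. If it were not, $\bar A$ would have a $\z$ summand, giving a surjection $\pi\colon\bar G\to\z$ and hence surjections onto $C_{2^k}$ for every $k$. Combining $\pi$ reduced modulo $2^k$ with the non-abelian quotient $\bar G\to\bar G/N_0$, the image $F_k$ is a finite quotient of $\bar G$; being a quotient of $\bar G$ it is Dedekind, and since it surjects onto the non-abelian $\bar G/N_0$ it is Hamiltonian, so $F_k=Q\times A\times B$. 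Yet $F_k$ also surjects onto $C_{2^k}$, and the $2$-Sylow subgroup $Q\times A$ of a Hamiltonian group has exponent $4$, so its cyclic $2$-group quotients have order at most $4$; taking $k=3$ gives a contradiction. Hence $\bar A$ is finite, and $\bar G$, being an extension of the finite group $\bar G'$ by the finite group $\bar A$, is finite. Being finite, non-abelian, and Dedekind, $\bar G=Q\times A\times B$ by Dedekind--Baer, which completes the proof.
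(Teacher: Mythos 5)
Your proof is correct, and it shares the overall skeleton of the paper's argument -- split according to whether a non-abelian finite quotient exists, handle the abelian branch via $R\le G'$ (you prove this through $\theta(R(G))\subseteq R(G/G')$ and residual finiteness of finitely generated abelian groups, which the paper simply asserts at the outset), and in the non-abelian branch kill an infinite abelianisation by producing a finite Dedekind quotient surjecting onto a cyclic $2$-group that no Hamiltonian group admits -- but your finiteness step runs by a genuinely different mechanism. For the cyclic-quotient obstruction, the paper uses $C_4$, since $Q/Q'=C_2\times C_2$ forces the $2$-part of the abelianisation of $Q\times A\times B$ to have exponent $2$; you use $C_8$ via the exponent-$4$ bound on $Q\times A$, which is coarser (cyclic $2$-quotients of a Hamiltonian group in fact have order at most $2$) but entirely sufficient, and your group $F_k$, the image of $\bar G$ in $C_{2^k}\times\bar G/N_0$, is the same object as the paper's $G/(M\cap N)$. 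The real divergence is in concluding that $G/R$ is finite: the paper stays at the level of finite images, noting that every non-abelian finite image $F$ is Hamiltonian with $|F'|=2$, so $|F|\le 2|G/G'|$ is bounded, and a finite image of maximal order must then have kernel exactly $R$; you instead transfer the fact $|F'|=2$ to the infinite quotient $\bar G=G/R$ itself, using residual finiteness and the inverse limit over the cofinal family of finite index normal $N\subseteq N_0$ (with all transition maps isomorphisms of $C_2$) to pin down $\bar G'\cong C_2$, whence $\bar G$ is finite as an extension of $C_2$ by the finite group $\bar G/\bar G'$. The paper's route is shorter; yours buys more explicit structure along the way ($\bar G'$ is a normal subgroup of order $2$, hence central, so $\bar G$ is nilpotent of class $2$ before finiteness is even known) and replaces the maximal-image selection argument by a direct computation inside $\bar G$, at the cost of the slightly more elaborate inverse-limit bookkeeping. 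The final appeal to Dedekind--Baer, using that a finite group in which every (automatically finite index) subgroup is normal is Dedekind, correctly closes the argument.
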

\begin{proof}
We have $R\leq G'$ so let us assume throughout that $R<G'$. Then $G$ has a
finite index subgroup $N$ with $G/N$ non-abelian. Now if $G/G'$ is infinite
then we have $M$ with $G/M=C_4$ and so $G/(M\cap N)$ is a finite non-abelian
group which surjects onto $C_4$, but as $Q/Q'=C_2\times C_2$ we have 
$G/(M\cap N)$ has a subgroup which is not normal and so $G$ has such a finite
index subgroup. But if $G/G'$ is finite then there is a bound for the order
of $F/F'$ as $F$ ranges over the finite images of $G$ because $F$ must have 
the form above, and so there is a bound for the order of $F$. Thus $R$ has
finite index in $G$ with $G/R$ equal to some $F$.
\end{proof}

\end{document}